\documentclass{article}
\usepackage{amsmath}
\usepackage{amsthm}
\usepackage{verbatim}
\usepackage{graphicx}
\usepackage[margin=1in]{geometry}

\newtheorem{theorem}{Theorem}
\newtheorem{lemma}{Lemma}

\theoremstyle{definition}
\newtheorem*{definition}{Definition}

\newcommand{\set}[1]{\{{#1}\}}

\newcommand{\faceS}[1]{\mathcal{S}_{#1}}
\newcommand{\periS}[1]{peri(\mathcal{S}_{#1})}
\newcommand{\gridS}[1]{grid(\mathcal{S}_{#1})}

\begin{document}

\title{On the covering of $n + \epsilon$ square with $n^2 + 1$ unit squares for $n \geq 4$}

\markright{Submission}

\author{Sira Sriswasdi\\ 
\scriptsize Research Affairs, Faculty of Medicine, Chulalongkorn University\\ 
\scriptsize sira.sr@chula.ac.th}

\maketitle

\begin{abstract}
 In 2006 \cite{soifer2006}, it was conjectured that one cannot fully cover a square of side length $> n$ with $n^2 + O(1)$ unit squares. For small $n \in \set{2, 3}$, in 2009, \cite{janusz2009} proved that it is impossible to fully cover a square of side length $> n$ with exactly $n^2 + 1$ unit squares. Recently in 2023, \cite{baek2023} proved that it is impossible to fully cover an equilateral triangle of side length $> n$ with exactly $n^2 + 1$ unit equilateral triangles whose sides are parallel to it. There were also some progress on a related problem: what is the largest square with side length $S(k)$ that can be fully covered by $k$ unit squares \cite{friedman2006,dosa2026}. However, there have been no improvement on the original conjecture. In this work, new tools have been developed that led to the proof of this conjecture for $n = 4$, with potential applications to related problems.
\end{abstract}

\noindent

\section{Problem}
In 2006, Alexander Soifer \cite{soifer2006} showed an general strategy for fully covering a square with side length slightly larger than $n$ using $n^2 + o(1)n + O(1)$ unit squares and conjectured that a full covering cannot be achieved with only $n^2 + O(1)$ unit squares. For small $n$, this conjecture simplifies to whether one can fully cover a square with side length slightly larger than $n$ using $n^2 + 1$ unit squares. In 2009, Janusz Januszewski \cite{janusz2009} proved that it is indeed impossible to do so for $n \in \set{2, 3}$. Since then, there has been no progress on this conjecture.

In parallel, studies have been done on a similar problem, such as the covering of equilateral triangles \cite{baek2023}, or from a different perspective, such as determining the largest value $S(k)$ such that a square with side length $S(k)$ can be fully covered by $k$ unit squares \cite{friedman2006,dosa2026}. Some of these results improved the known bounds for the number of unit squares required to achieve a full covering. For example, Dosa, G. \textit{et al.} \cite{dosa2026} proved that a square with side length slightly larger than $2$ cannot be fully covered by $6$ unit squares. Furthermore, as we will see below, the consideration of the covering of the perimeter that was proposed in Dosa, G. \textit{et al.} \cite{dosa2026} is an important tool in our proof.

\section{Definition}
Here, we define the notations concerning parts of the square and, given its covering, the number of unit squares that were placed in certain configurations which will be reused throughout the proof.

\begin{definition}
We denote by $\faceS{n}$ the square with side length $n + \epsilon$, for arbitrarily small $\epsilon > 0$, and $\periS{n}$ its perimeter. 
\end{definition}

Hence, the length of $\periS{n}$ is $4(n+\epsilon) > 4n$.

\begin{definition}
We denote by $\gridS{n}$ the set of $(n + 1) \times (n + 1)$ evenly-spaced line segments parallel to the sides of $\faceS{n}$ (including the sides of $\faceS{n}$ itself) that divide $\faceS{n}$ into $n^2$ equal cells. Note that our definition of $\gridS{n}$ contain $\periS{n}$.
\end{definition}

Hence, the total length of $\gridS{n}$ is $2(n + 1)(n+\epsilon) > 2n(n+1)$. Furthermore, parallel line segments in $\gridS{n}$ are of distance $\frac{n + \epsilon}{n} > 1$ apart.

\begin{definition}
Given a covering of $\faceS{n}$ with unit squares, we call a unit square that intersects with two sides of $\periS{n}$ (including one that covers a corner, as a corner of $\faceS{n}$ is considered to be part of two sides) as a \textit{double-sided perimeter tile} and one that intersects with exactly one side of $\periS{n}$ as a \textit{single-sided perimeter tile}. Lastly, we denote by \textit{interior tiles} the unit squares that lie strictly in the interior of $\faceS{n}$. 
\end{definition}

Because for $n > 1$, a unit square tiles cannot intersect with opposite sides of $\periS{n}$, any \textit{double-sided perimeter tile} must intersect with two adjacent sides of $\faceS{n}$ that form a corner. Thus, we can unambiguously assign each \textit{double-sided perimeter tile} to a corner of $\faceS{n}$.

Also, it is clear that $\periS{n}$ can only be covered by \textit{double-sided perimeter tile} and \textit{single-sided perimeter tile}.

\begin{figure}[h]
    \centering
    \includegraphics[width=0.8\linewidth]{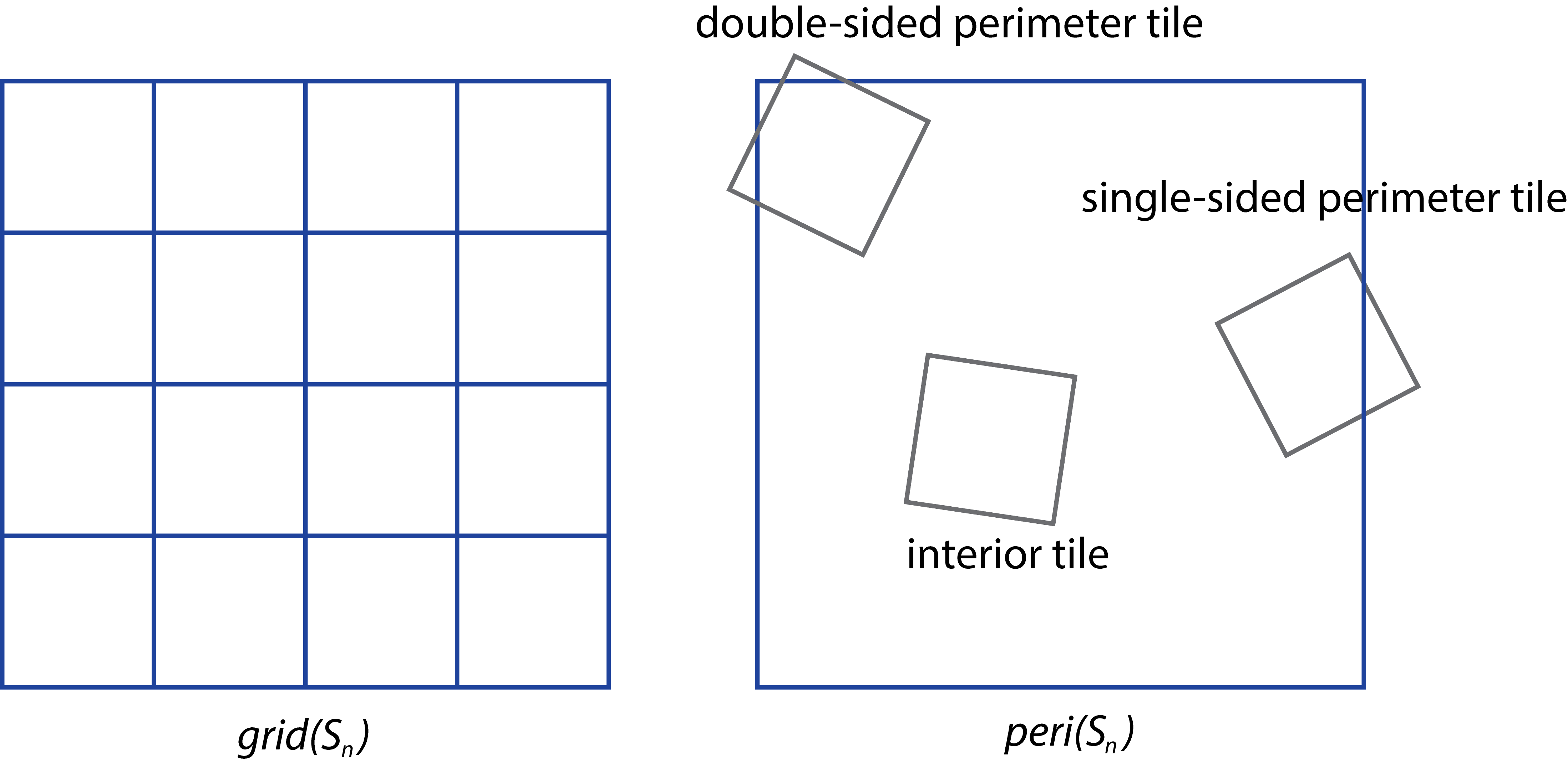}
    \caption{Illustration for $\periS{n}$, $\gridS{n}$, and different types of tiles}
    \label{fig:definition}
\end{figure}

\begin{definition}
Given a covering of $\faceS{n}$ with unit squares, for each corner of $\faceS{n}$, we denote by $n_c$ the number of corners of $\faceS{n}$ that are associated with at least $2$ \textit{double-sided perimeter tiles}. Clearly, $0 \leq n_c \leq 4$.
\end{definition}

\begin{definition}
Given a covering of $\faceS{n}$ with unit squares, we denote by $n_s$ the number of \textit{single-sided perimeter tiles} and $n_d$ the number of \textit{double-sided perimeter tiles}. Clearly, $n_d \geq 4$ because each corner of $\faceS{n}$ must be covered by a different unit square.
\end{definition}

\section{Optimal coverage of $\periS{n}$ and $\gridS{n}$ by unit square tiles}
Here, we develop some useful lemmas that limit how much each unit square tile can contribute to covering of the $\periS{n}$ and $\gridS{n}$, which in turns constrain the number of unit squares needed to fully cover $\periS{n}$ and the interior part of $\gridS{n}$.

The first three lemmas concern the maximum coverage of $\periS{n}$ and $\gridS{n}$ achievable by a \textit{single-sided perimeter tile} or an \textit{interior tile}.

\begin{lemma}\label{lemma:single-sided-cover-peri}
A \textit{single-sided perimeter tile} cannot cover $\periS{n}$ by more than $\sqrt{2}$ unit length.
    \begin{proof}
    This is trivial, as any two points covered by a unit square cannot be more than $\sqrt{2}$ apart. The maximum coverage is achieved by a 45-degree tilted configuration where the unit square's diagonal coincides with a side of $\periS{n}$ (Figure \ref{fig:optimal-cover-summary}).
    \end{proof}
\end{lemma}

\begin{figure}[h]
    \centering
    \includegraphics[width=0.8\linewidth]{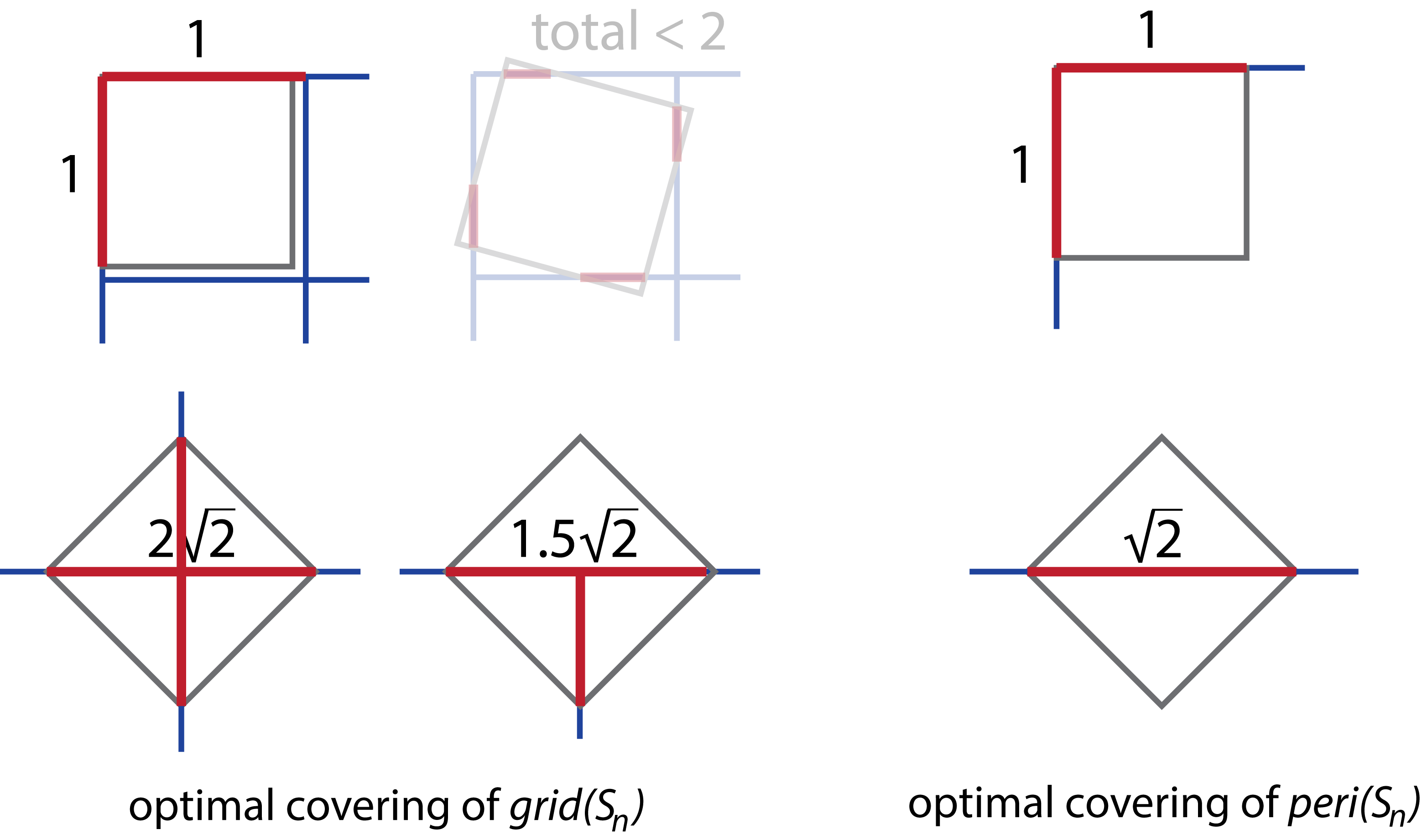}
    \caption{Configurations of a unit square to achieve the maximal coverage of $\periS{n}$ or $\gridS{n}$}
    \label{fig:optimal-cover-summary}
\end{figure}

\begin{lemma}\label{lemma:single-sided-cover-grid}
A \textit{single-sided perimeter tile} cannot cover $\gridS{n}$ by more than $1.5\sqrt{2}$ unit length.
    \begin{proof}
    WLOG, we consider a \textit{single-sided perimeter tile} with vertices $ABCD$ that intersect with the top side of $\gridS{n}$. For this \textit{single-sided perimeter tile} to achieve the maximal covering a $\gridS{n}$, we can follow the strategy outlined in (Figure \ref{fig:double-sided-cover-grid}). 

    The first key idea (Step 1) is that if the lowest vertex of $ABCD$ ($A$ in Figure \ref{fig:double-sided-cover-grid}) does not coincide with a vertical segment of $\gridS{n}$, we can translate $ABCD$ horizontally to maintain its coverage of the top side of $\gridS{n}$ while increasing its coverage of the vertical grid segment.

    The second key idea (Step 2) is that if one of the two adjacent vertices to $A$ ($B$ and $C$ in Figure \ref{fig:double-sided-cover-grid}) is above the top side of $\gridS{n}$, we can translate $ABCD$ vertically downwards until both of them are on or below the top side to maintain its coverage of the top side of $\gridS{n}$ while increasing its coverage of the vertical grid segment.

    Hence, the configuration of $ABCD$ that achieves the optimal coverage of $\gridS{n}$ must have its lowest vertex lying on a vertical segment of $\gridS{n}$ and its second highest vertex lying on the top side of $\gridS{n}$. WLOG, let the second highest vertex be $B$ and let the side $BC$ forms an angle $0 \leq \theta \leq \frac{\pi}{4}$ with the top side of $\gridS{n}$ (Figure \ref{fig:double-sided-cover-grid}). We can calculate the coverage of $\gridS{n}$ by $ABCD$ as a function of $\theta$ as follow:
    \begin{align*}
        d_\theta &= \frac{1}{\cos{\theta}} \text{, } h_\theta = \cos{\theta} \\
        \text{coverage} &= d_\theta + h_\theta = \frac{1}{\cos{\theta}} + \cos{\theta} \leq \sqrt{2} + \frac{1}{\sqrt{2}} = 1.5\sqrt{2}
    \end{align*}
    with the last inequality following from the fact that the function $\frac{1}{\cos{x}} + \cos{x}$ is increasing on $[0, \frac{\pi}{4}]$.

    Therefore, the maximal coverage of $\gridS{n}$ is achieved by placing a 45-degree tilted unit square so that its centroid coincides with the intersection between a side of $\periS{n}$ and an internal line segment of $\gridS{n}$ (Figure \ref{fig:optimal-cover-summary}). This configuration achieves a coverage of exactly $1.5\sqrt{2}$ unit length.

    \begin{figure}[h]
        \centering
        \includegraphics[width=0.8\linewidth]{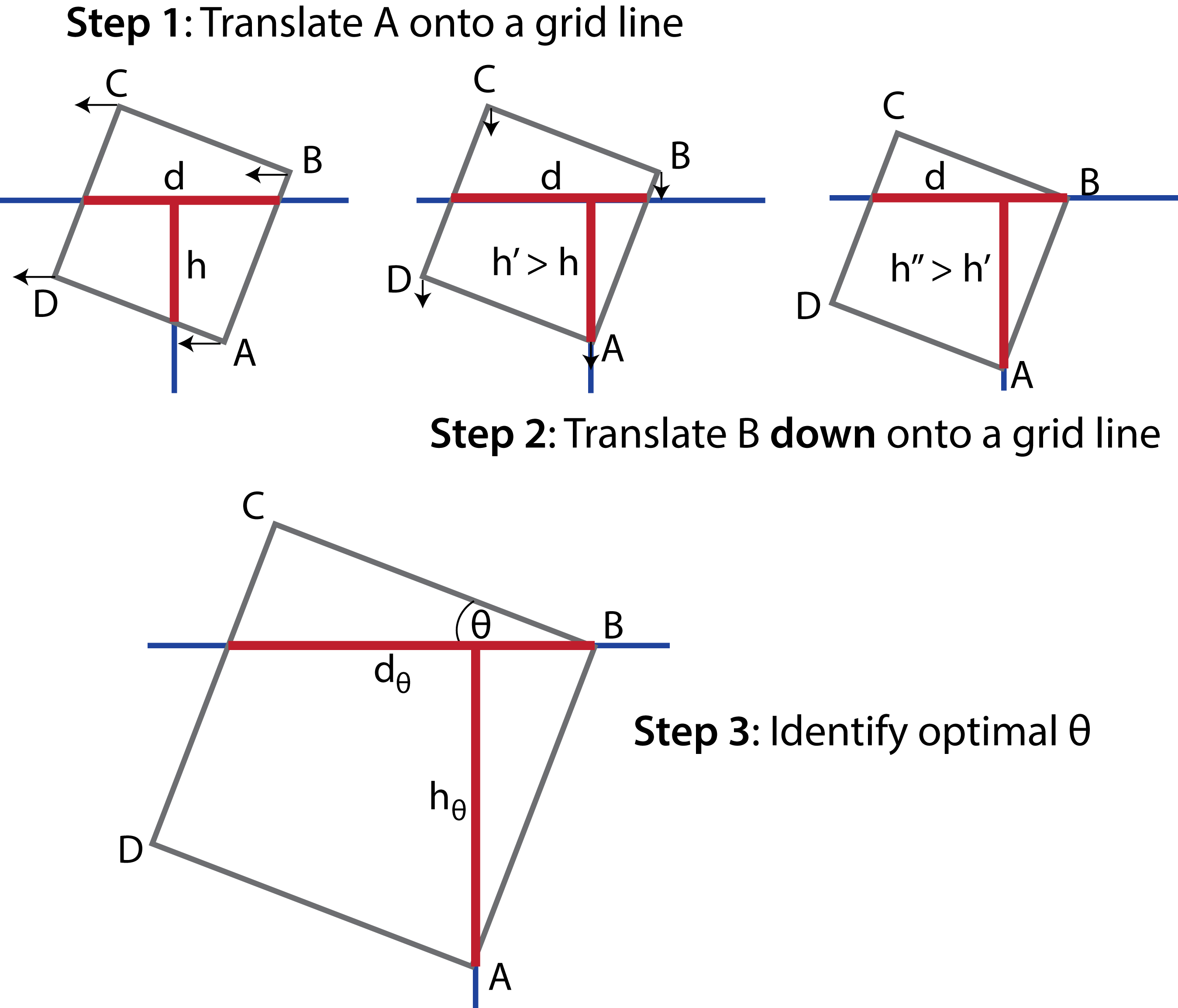}
        \caption{Transformation strategy to achieve the maximal coverage of $\gridS{n}$ by a \textit{single-sided perimeter tile}}
        \label{fig:double-sided-cover-grid}
    \end{figure}
    \end{proof}
\end{lemma}

\begin{lemma}\label{lemma:interior-cover-grid}
An \textit{interior tile} cannot cover $\gridS{n}$ by more than $2\sqrt{2}$ unit length.
    \begin{proof}
    If an \textit{interior tile} intersects with two parallel line segments of $\gridS{n}$ (vertical or horizontal) which are $>1$ unit apart, as shown in \cite{janusz2009} (Figure \ref{fig:janusz-cover}), this \textit{interior tile} cannot cover the $\gridS{n}$ by more than $1$ unit length in that orientation (vertical or horizontal). On the other hand, if an \textit{interior tile} intersects with only one parallel line segment of $\gridS{n}$ (vertical or horizontal), it can achieve a maximum coverage of $\sqrt{2}$ unit length. 
    
    Hence, the maximum coverage of $\gridS{n}$ by an \textit{interior tile} is $2\sqrt{2}$. This is achieved by a 45-degree tilted configuration where the unit square's centroid coincides with an intersection point between a horizontal and a vertical line segment of $\gridS{n}$ (Figure \ref{fig:optimal-cover-summary}).

    \begin{figure}[h]
        \centering
        \includegraphics[width=0.5\linewidth]{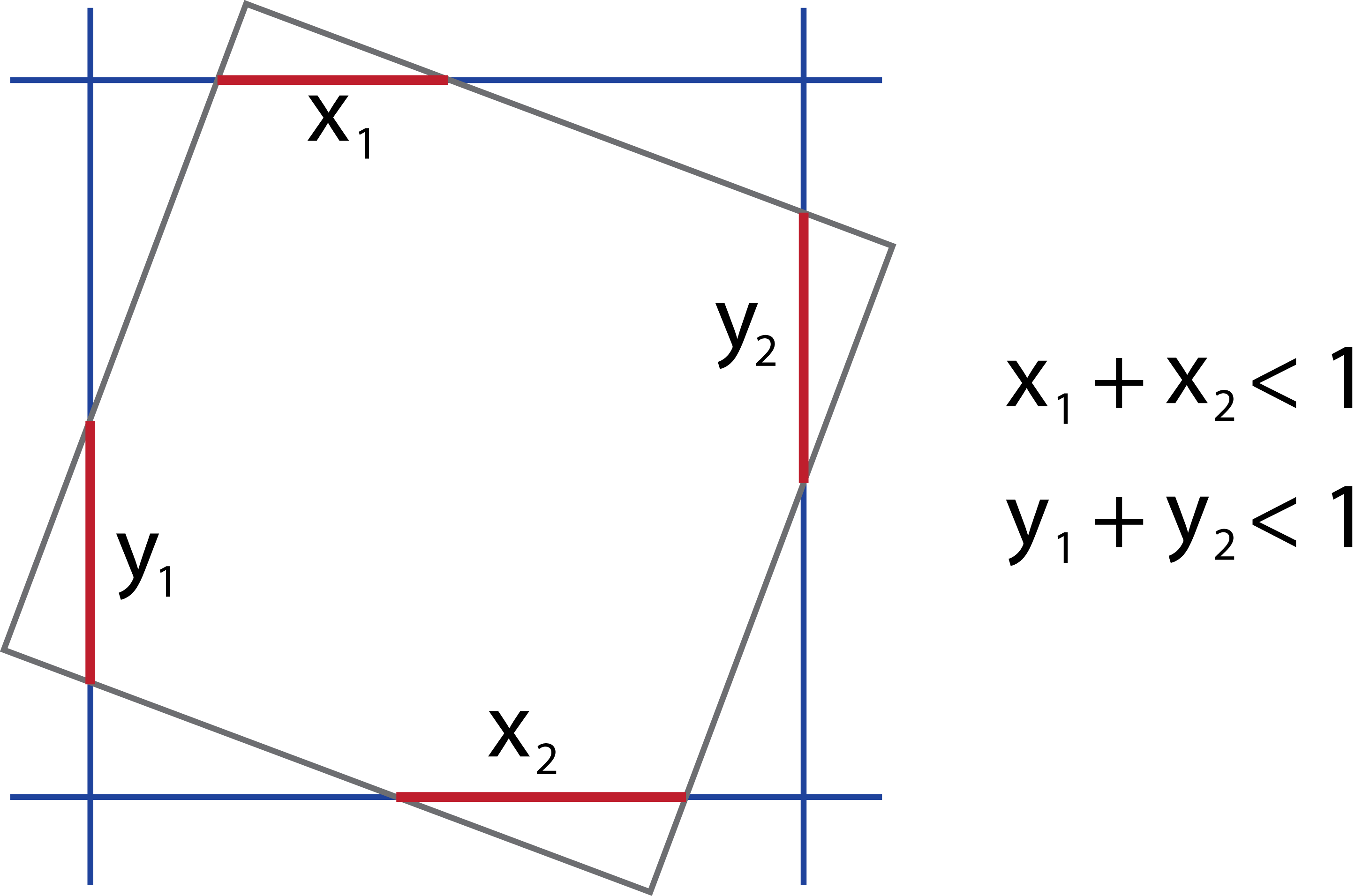}
        \caption{Result from \cite{janusz2009} when a unit square intersects two parallel grid segments of distance $>1$ apart}
        \label{fig:janusz-cover}
    \end{figure}
    \end{proof}
\end{lemma}

The next two lemmas concern the maximum coverage of $\periS{n}$ and $\gridS{n}$ achievable by \textit{double-sided perimeter tiles}.

\begin{lemma}\label{lemma:double-sided-cover}
A \textit{double-sided perimeter tile} cannot cover $\periS{n}$ or $\gridS{n}$ by more than $2$ unit length.
    \begin{proof}
    This followed directly from the first lemma in \cite{janusz2009}. The maximum coverage is achieved by aligning the unit square with the corner of $\faceS{n}$ (Figure \ref{fig:optimal-cover-summary}).
    \end{proof}
\end{lemma}

\begin{lemma}\label{lemma:corner-cover}
All \textit{double-sided perimeter tiles} that are associated with the same corner of $\faceS{n}$ collectively cannot cover $\periS{n}$ by more than $2\sqrt{2}$ unit length.
    \begin{proof}
    Let $l_1$ and $l_2$ be the two sides of $\periS{n}$ that meets at a corner $C$. Let $\mathcal{D}$ be the collection of \textit{double-sided perimeter tiles} that intersect with both $l_1$ and $l_2$. Let $P_1$ and $P_2$ be points on $l_1$ and $l_2$ that are furthest away from the $C$ and are covered by some unit squares in $\mathcal{D}$ (Figure \ref{fig:corner-cover}).

    It is clear that $|CP_1|, |CP_2| \leq \sqrt{2}$ because each unit square in $\mathcal{D}$ intersects with both $l_1$ and $l_2$ and $|CP_1|, |CP_2|$ are the shortest distances from $P_1$ and $P_2$ to $l_2$ and $l_1$, respectively.

    Therefore, all unit squares in $\mathcal{D}$ collectively cover $\periS{n}$ by $|CP_1| + |CP_2| \leq 2\sqrt{2}$. The maximum coverage is achieved by placing two 45-degree tilted unit squares so that one of their corners coincides with $C$ and their diagonals coincide with $l_1$ and $l_2$ (Figure \ref{fig:corner-cover}).

    \begin{figure}[h]
        \centering
        \includegraphics[width=0.8\linewidth]{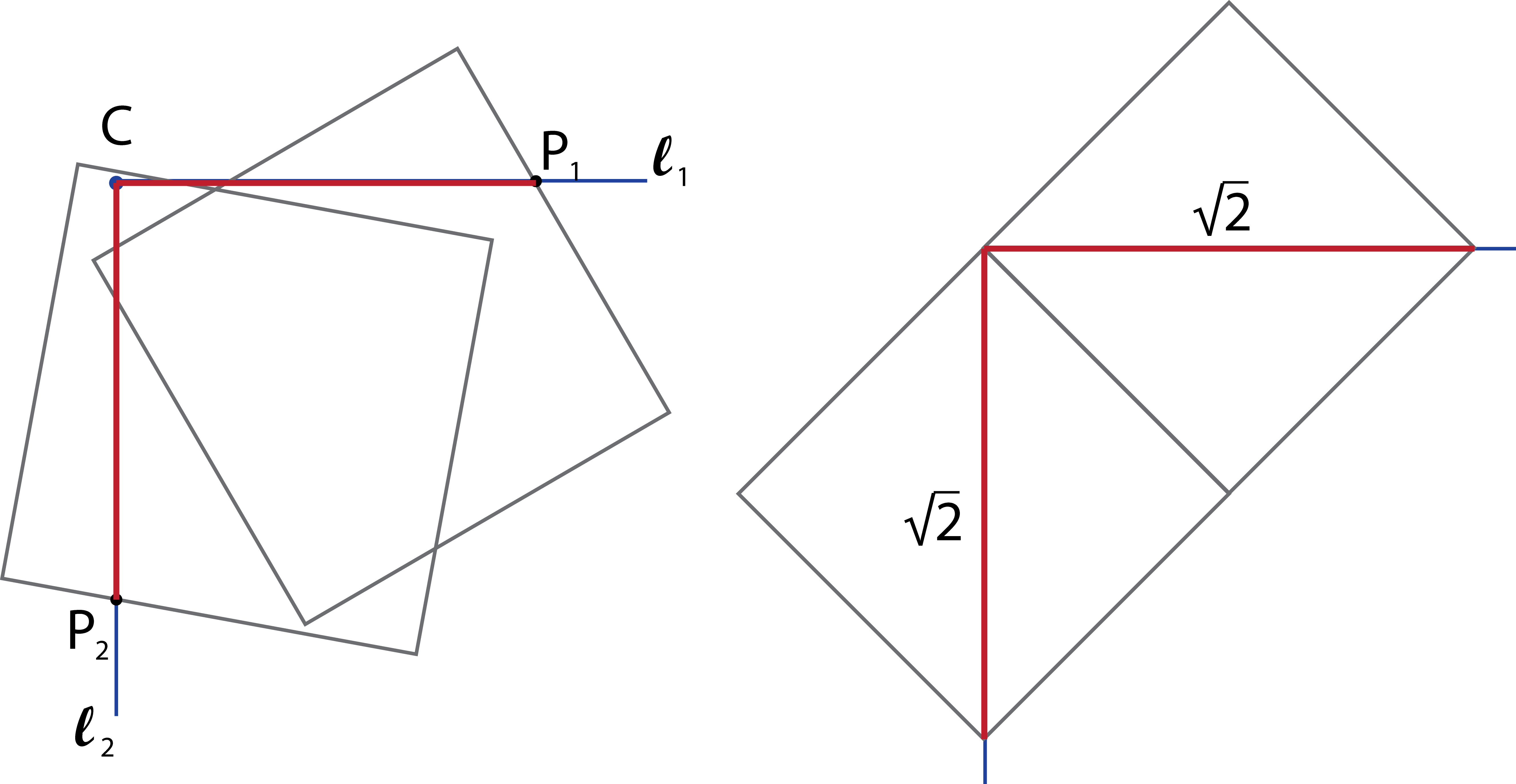}
        \caption{Maximal coverage of $\periS{n}$ by multiple \textit{double-sided perimeter tiles} associated with the same corner of $\faceS{n}$}
        \label{fig:corner-cover}
    \end{figure}
    \end{proof}
\end{lemma}

\section{Spill created by unit squares covering $\periS{n}$}
In this section, we derive a lower bound on the area of a \textit{single-sided perimeter tile} that must spill outside of $\faceS{n}$.

\begin{lemma}\label{lemma:spill-area}
A \textit{single-sided perimeter tile} that intersects with a side of $\periS{n}$ by a segment of length $b > 1$ must spill outside of $\faceS{n}$ by an area of at least $\frac{\sqrt{b^2 - 1}}{2}$.
    \begin{proof}
    Because $b > 1$, the unit square cannot be parallel to the side of $\periS{n}$ and must meet that side at an angle $0 < \theta \leq \frac{\pi}{4}$ (WLOG, we define $\theta$ as the smaller angle). This creates a spill outside of $\faceS{n}$ with an area at least that of a right triangle with hypotenuse of length $b > 1$ and angles $\theta$ and $\frac{\pi}{2}-\theta$ (Figure \ref{fig:spill-area}), which is 
    \begin{equation*}
        \frac{b^2\sin{\theta}\cos{\theta}}{2} = \frac{b^2\sin{2\theta}}{4}
    \end{equation*}

    Also, because the two sides that form the right angle cannot be longer than $1$ (since it was part of a unit square), we have $0 < \sin{\theta} \leq \cos{\theta} \leq \frac{1}{b}$, which imples the following bound for $\theta$
    \begin{equation*}
        \arccos{\frac{1}{b}} \leq \theta \leq \frac{\pi}{4}
    \end{equation*}

    Since $\sin{2\theta}$ is an increasing function on the interval $\left[\arccos{\frac{1}{b}}, \frac{\pi}{4}\right]$, the spill area must be at least
    \begin{equation*}
        \frac{b^2\sin{\left(\arccos{\frac{1}{b}}\right)}\cos{\left(\arccos{\frac{1}{b}}\right)}}{2} = \frac{b^2 \cdot \frac{\sqrt{b^2-1}}{b} \cdot\frac{1}{b}}{2} = \frac{\sqrt{b^2-1}}{2}
    \end{equation*}
    
    \begin{figure}[h]
        \centering
        \includegraphics[width=0.888\linewidth]{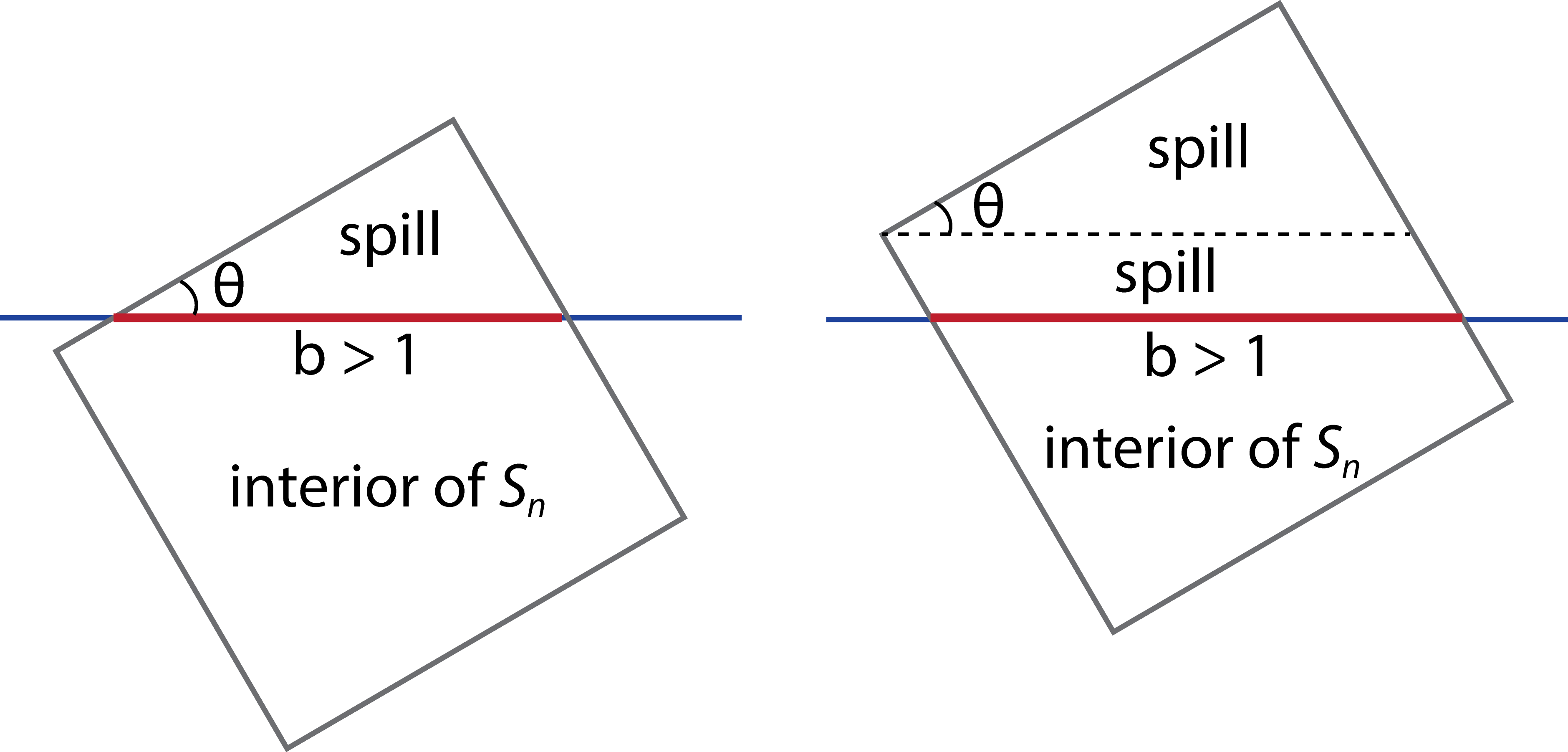}
        \caption{Spill area produced by a \textit{single-sided perimeter tile} which intersects with $\periS{n}$ by $>1$ unit length}
        \label{fig:spill-area}
    \end{figure}
    \end{proof}
\end{lemma}

\section{The proof for $n = 4$}
Let us assume that there is a full covering of $\faceS{4}$ with side length $4 + \epsilon$ by $4^2 + 1 = 17$ unit squares. We will prove that this leads to a contradiction.

By combining the results of the above lemmas, we are now ready to determine the possible values for $n_c$, the number of corners associated with multiple \textit{double-sided perimeter tiles}, and $n_d$ and $n_s$, the number of \textit{double-sided perimeter tiles} and \textit{single-sided perimeter tiles}, respectively.

Our first condition concerns the coverage of $\periS{4}$, which arises from three scenarios: a corner of $\faceS{4}$ covered by one \textit{double-sided perimeter tiles} (there are $4 - n_c$ such corners), a corner of $\faceS{4}$ covered by multiple single \textit{double-sided perimeter tiles} (there are $n_c$ such corners), and \textit{single-sided perimeter tiles} (there are $n_s$ such tiles). Hence, utilizing the fact that the total coverage of $\periS{4}$ achieved by the unit squares cannot exceed the sum of individual maximum, we can derive the following inequality from Lemma \ref{lemma:corner-cover}, \ref{lemma:double-sided-cover}, and \ref{lemma:single-sided-cover-peri}, respectively:
\begin{equation*}
    16 < 4(4 + \epsilon) \leq 2(4-n_c) + 2\sqrt{2}n_c + \sqrt{2}n_s
\end{equation*}
which simplifies to
\begin{equation}\label{eq:peri-bound}
    8 < 2(\sqrt{2} - 1)n_c + \sqrt{2}n_s
\end{equation}

Our second condition concerns the coverage of $\gridS{4}$, which arises from the three tile types: \textit{double-sided perimeter tiles} (there are $n_d$ such tiles), \textit{single-sided perimeter tiles} (there are $n_s$ such tiles), and \textit{interior tiles} (there are $17 - n_d - n_s$ such tiles). Hence, utilizing the fact that the total coverage of $\gridS{4}$ achieved by the unit squares cannot exceed the sum of individual maximum, we can derive the following inequality from Lemma \ref{lemma:double-sided-cover}, \ref{lemma:single-sided-cover-grid}, and \ref{lemma:interior-cover-grid}, respectively.
\begin{equation*}
    40 < 2(4 + 1)(4 + \epsilon) \leq 2n_d + 1.5\sqrt{2}n_s + 2\sqrt{2}(17 - n_d - n_s)
\end{equation*}
which simplifies to
\begin{equation}\label{eq:grid-bound}
    34\sqrt{2} - 40 > 2(\sqrt{2} - 1)n_d + 0.5\sqrt{2}n_s
\end{equation}

Also, from the definition of $n_c$ and $n_d$, we know that $n_d \geq 4 + n_c$ with equality when each of the $n_c$ corners is associated with exactly $2$ unit squares. Adding this to (\ref{eq:grid-bound}) yields
\begin{equation*}
    34\sqrt{2} - 40 > 2(\sqrt{2} - 1)(4 + n_c) + 0.5\sqrt{2}n_s
\end{equation*}
or equivalently,
\begin{equation}\label{eq:grid-bound-2}
    26\sqrt{2} - 32 > 2(\sqrt{2} - 1)n_c + 0.5\sqrt{2}n_s
\end{equation}
Using the fact that $n_c \geq 0$ in (\ref{eq:grid-bound-2}) yields an upper bound for $n_s$
\begin{equation}\label{eq:ns-upper-bound}
    n_s \leq \left\lfloor \frac{26\sqrt{2} - 32}{0.5\sqrt{2}} \right\rfloor = 6 
\end{equation}

Adding (\ref{eq:peri-bound}) to (\ref{eq:grid-bound-2}) gives us the following lower bound for $n_s$
\begin{equation}\label{eq:ns-lower-bound}
    n_s \geq \left\lceil \frac{40 - 26\sqrt{2}}{0.5\sqrt{2}} \right\rceil = 5 
\end{equation}
Hence, we have $n_s \in \set{5, 6}$.

Next, we show that the case $n_s = 5$ is not possible by substituting into (\ref{eq:peri-bound}) and (\ref{eq:grid-bound-2}) to yield a contradiction (also using the fact that $n_c$ is an integer)
\begin{equation*}
    2 = \left\lceil \frac{8 - 5\sqrt{2}}{2(\sqrt{2} - 1)} \right\rceil \leq n_c \leq \left\lfloor \frac{26\sqrt{2} - 32 - 2.5\sqrt{2}}{2(\sqrt{2} - 1)} \right\rfloor = 1
\end{equation*}

Therefore, $n_s = 6$, and (\ref{eq:grid-bound-2}) implies that
\begin{equation*}
    0 \leq n_c \leq  \left\lfloor \frac{26\sqrt{2} - 32 - 3\sqrt{2}}{2(\sqrt{2} - 1)} \right\rfloor = 0
\end{equation*}
which leads to the following key finding
\begin{theorem}\label{thm:ns-count}
A full covering of $\faceS{4}$ using $4^2 + 1=17$ unit squares, if exists, must consist of exactly $4$ \textit{double-sided perimeter tiles} each covering a corner of $\faceS{4}$ and exactly $6$ \textit{single-sided perimeter tiles}. The other $7$ unit squares are \textit{interior tiles} that do not intersect with $\periS{4}$.
\end{theorem}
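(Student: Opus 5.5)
The plan is to collect the integer constraints derived just above and check that they force $n_c = 0$, $n_d = 4$, $n_s = 6$.

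First, from $n_s = 6$ the computation preceding the statement gives $n_c = 0$. So every corner of $\faceS{4}$ is associated with exactly one \textit{double-sided perimeter tile}. Since every \textit{double-sided perimeter tile} is assigned unambiguously to a corner, and each corner must be covered by its own tile, we get $n_d = 4$ exactly, with each of these four tiles covering a corner.

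Next, I will double-check that $n_d = 4$ is consistent with, and in fact forced by, (\ref{eq:grid-bound}). Substituting $n_s = 6$ gives
\begin{equation*}
2(\sqrt{2}-1)n_d < 34\sqrt{2} - 40 - 3\sqrt{2} = 31\sqrt{2} - 40 \approx 3.84 .
\end{equation*}
Since $2(\sqrt{2}-1) \approx 0.828$, this yields $n_d \le 4$, and $n_d \geq 4$ holds trivially. This check is independent of the step $n_d \ge 4 + n_c$ and confirms the count.

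Finally, the remaining $17 - n_d - n_s = 7$ tiles are by definition neither single- nor double-sided perimeter tiles. Hence they meet no side of $\periS{4}$. I will read ``interior tile'' as a tile that does not intersect $\periS{4}$; such a tile lies in the interior of $\faceS{4}$ or partly outside. A tile partly outside but not touching the perimeter would have to lie entirely outside, since it is connected. Such a tile contributes nothing to the covering, and replacing it by an interior tile changes none of the bounds. So we may assume all $7$ are \textit{interior tiles} in the paper's sense.

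The main point needing care is this last definitional gap: the original inequality (\ref{eq:grid-bound}) counted the leftover tiles as interior, which is valid because a tile disjoint from $\faceS{4}$ covers zero grid length, well below the $2\sqrt{2}$ bound. Beyond that, the argument is just the arithmetic of the floors and ceilings already carried out. I expect the only real obstacle to be verifying those numeric roundings: $26\sqrt2-32\approx 4.77$ gives $n_s\le 6$, and $(8-5\sqrt2)/(2(\sqrt2-1))\approx 1.04$ gives the lower bound $2$ on $n_c$ when $n_s=5$.
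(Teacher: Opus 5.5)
You follow the paper's route exactly, and the bookkeeping you add is fine: $n_c=0$ forces one \textit{double-sided perimeter tile} per corner, hence $n_d=4$. A tile meeting no side lies either in the interior or entirely outside, and the latter is impossible outright, since the other $16$ tiles would have to cover area $>16$; that is cleaner than your ``replace it'' step.

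The gap is in the input you import from the computation preceding the statement. Both $n_s\le 6$ and $n_c=0$, and also your supposedly independent check $n_d\le 4$, come from (\ref{eq:grid-bound}). That inequality charges each \textit{single-sided perimeter tile} at most $1.5\sqrt2$ of $\gridS{4}$ by Lemma \ref{lemma:single-sided-cover-grid}, and that bound is false. Its proof only accounts for the top side and one vertical segment. But the first internal horizontal segment lies only $s=1+\epsilon/4<\sqrt2$ below the top side, so such a tile can reach it too. Concretely, take the $45^\circ$-tilted unit square whose top vertex is the point where an internal vertical segment meets the top side. It meets no other side of $\periS{4}$. It contains a length-$\sqrt2$ piece of that vertical segment (its whole diagonal). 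It also cuts a chord of length $2(\sqrt2-s)$ from the horizontal segment at depth $s$. The total is $3\sqrt2-2-\epsilon/2\approx 2.243>1.5\sqrt2\approx 2.121$. Translating it up by a small $t>0$ gives positive overlap with the top side and coverage $3\sqrt2-2-\epsilon/2-t$, so the degenerate contact is not the issue.

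Replace the per-tile constant by (at least) $3\sqrt2-2$. The coefficient of $n_s$ in (\ref{eq:grid-bound}) then drops from $0.5\sqrt2$ to $2-\sqrt2$, and the integer analysis no longer closes. For example:
\begin{itemize}
\item $(n_s,n_c,n_d)=(5,2,6)$ gives perimeter capacity $4+9\sqrt2>16$ and grid capacity $2+27\sqrt2>40$;
\item $(6,1,5)$ gives $6+8\sqrt2>16$ and $30\sqrt2-2>40$.
\end{itemize}
So none of $n_s=6$, $n_c=0$, $n_d=4$ follows from these two inequalities. Since your argument, like the paper's, is nothing but this counting, the statement is not established.

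A repair would need a correct grid bound for \textit{single-sided perimeter tiles}. Most likely this means a joint bound trading off a tile's coverage of $\periS{4}$ against its coverage of $\gridS{4}$, rather than two separate per-tile maxima; note that the offending tile covers almost none of the perimeter. (Minor: $(8-5\sqrt2)/(2(\sqrt2-1))\approx 1.12$, not $1.04$, though the ceiling is still $2$.)
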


Next, using Lemma \ref{lemma:spill-area}, we will derive a lower bound for the total spill area produced by the $6$ \textit{single-sided perimeter tiles}. 

First, we note that out of $>16$ unit length of $\periS{4}$, at most $8$ unit length could be covered by the $4$ \textit{double-sided perimeter tiles}. Hence, the remaining $>8$ unit length must be covered by the $6$ \textit{single-sided perimeter tiles}. 

Let $b_1\geq b_2\geq\dots\geq b_6$ denote the lengths of the intersection between these $6$ \textit{single-sided perimeter tiles} with $\periS{4}$, sorted in descending order. We have the follow conditions on $b_i$'s
\begin{equation}\label{eq:b-bound-base}
    \sum_{i=1}^6 b_i > 8 \text{, where } 0 \leq b_i \leq \sqrt{2}
\end{equation}    
If $k$ terms of $b_i$'s are $\leq1$, then the largest $6-k$ terms of $b_i$'s must satisfy
\begin{equation*}
    (6 - k)\sqrt{2} \geq \sum_{i=1}^{6-k} b_i > 8 - k
\end{equation*}
which yields $k \leq \left\lfloor \frac{6\sqrt{2} - 8}{\sqrt{2} - 1} \right\rfloor = 1$. This means that at least $5$ terms of $b_i$'s are $>1$.

Assuming that there are exactly $m$ terms of $b_i$'s that are $>1$, with $m \in \set{5,6}$. We have the following conditions
\begin{align}\label{eq:b-bound}
    b_1&\geq b_2\geq\dots\geq b_m > 1 \nonumber\\
    \sum_{i=1}^m b_i &> 8 - (6 - m) = m + 2
\end{align}
Or equivalently
\begin{align*}
    \sum_{i=1}^m (b_i - 1) &> 2
\end{align*}
It is interesting to note that this condition does not depend on the exact value of $m$, only on the fact that at least one term among $b_i$'s (e.g., $b_1$) is $>1$.

From Lemma \ref{lemma:spill-area}, these $m$ \textit{single-sided perimeter tiles} associated with $b_1,b_2,\dots,b_m$ must produced a total spill area of at least
\begin{equation}\label{eq:spill-area}
    \text{total spill area} \geq \frac{1}{2} \sum_{i=1}^m \sqrt{b_i^2-1}
\end{equation}
Furthermore, because the function $f(x) = \sqrt{x^2 - 1}$ is strictly concave on the interval $(1, \sqrt{2}]$ (its second derivative is $-\frac{1}{(x^2-1)\sqrt{x^2-1}} < 0$ on this interval), $f(x)$ lies on or above the line segment connecting the points $(1, f(1) = 0)$ and $(\sqrt{2}, f(\sqrt{2}) = 1)$. Hence,
\begin{equation*}
    \sqrt{x^2 - 1} = f(x) \geq \frac{x - 1}{\sqrt{2} - 1} \text{, for } x \in (1, \sqrt{2}]
\end{equation*}
Using this fact in (\ref{eq:spill-area}) yields
\begin{align*}
    \text{total spill area} &\geq \frac{1}{2} \sum_{i=1}^m \sqrt{b_i^2-1} \geq \frac{1}{2} \sum_{i=1}^m \frac{b_i-1}{\sqrt{2} - 1} \\
    &= \frac{1}{2(\sqrt{2} - 1)} \sum_{i=1}^m (b_i-1) \\
    &> \frac{2}{2(\sqrt{2} - 1)} \approx 2.414
\end{align*}

This is a contradiction because if the total spill area is $>1$, then these $17$ unit squares would have to fully cover $\faceS{4}$, whose area is $>16$, using the remaining area of $<16$.

Therefore, there is no full covering of $\faceS{4}$ using only $17$ unit squares.

\section{Implication for related problems}
Our idea to simultaneously consider the coverages of $\periS{n}$ and $\gridS{n}$ is incredibly tight for the case of $n = 4$ and $17$ unit squares, both for determining the exact values of $n_s$ and $n_c$ and for deriving the contradiction via the spill area produced by $n_s$ \textit{single-sided perimeter tiles}.

Increasing the number of unit squares to $\geq18$ affects only the $26\sqrt{2}$ terms in (\ref{eq:ns-upper-bound}) and (\ref{eq:ns-lower-bound}) and yield much looser bounds for $n_s$
\begin{align*}
    \text{With $18$ unit squares; } 1 = \left\lceil \frac{40 - 28\sqrt{2}}{0.5\sqrt{2}} \right\rceil \leq n_s &\leq \left\lfloor \frac{28\sqrt{2} - 32}{0.5\sqrt{2}} \right\rfloor = 10 \\
    \text{With $19$ unit squares; } 0 = \left\lceil \frac{40 - 30\sqrt{2}}{0.5\sqrt{2}} \right\rceil \leq n_s &\leq \left\lfloor \frac{30\sqrt{2} - 32}{0.5\sqrt{2}} \right\rfloor = 14
\end{align*}
Using the fact that $n_c \leq 4$ in (\ref{eq:peri-bound}) yields a slightly better lower bound for $n_s$
\begin{equation*}
    n_s \geq \left\lceil \frac{16 - 8\sqrt{2}}{\sqrt{2}} \right\rceil = 4
\end{equation*}

None of these possible values for $n_s$ can be ruled out by plugging into (\ref{eq:peri-bound}) and (\ref{eq:grid-bound}) to derive a contradiction through $n_c$ as before. Furthermore, almost all of these cases permit $n_c > 0$ which considerably weakens the condition (\ref{eq:b-bound-base}) on the spill area produced by $n_s$ \textit{single-sided perimeter tiles} by reducing the fractional length of $\periS{4}$ that must be covered by \textit{single-sided perimeter tiles}. This fact in combination with an increased number of \textit{single-sided perimeter tiles} means that none of the $b_i$'s defined in (\ref{eq:b-bound-base}) has to be $>1$.

Similarly, applying our technique to the covering of a square of side length $5 + \epsilon$ with $5^2 + 1 = 26$ unit squares yields the following loose bounds for $n_s$
\begin{equation*}
    7 = \left\lceil \frac{20 - 8\sqrt{2}}{\sqrt{2}} \right\rceil \leq n_s \leq \left\lfloor \frac{44\sqrt{2} - 52}{0.5\sqrt{2}} \right\rfloor = 14
\end{equation*}

For the case $n = 4$ with $\geq 18$ unit squares, it should be possible to improve the bound on the spill area by including those produced at the $n_c$ corner(s) associated with multiple \textit{double-sided perimeter tiles} and also considering the overlap among multiple \textit{double-sided perimeter tiles} at the same corner.

For $n \geq 5$, we can foresee how our technique would fail, since the coverages of $\periS{n}$ and $\gridS{n}$ scale with $n$ while the number of allowed unit squares scales with $n^2$.

\section{Acknowledgments}
The author would like to thank Prof. Wacharin Wichiramala and Kirati Sriamorn from the Department of Mathematics and Computer Science, Chulalongkorn University and Prof. Supanut Chaidee from the Department of Mathematics, Chiang Mai University for introducing me to this problem and for their discussion that contributed to various ideas in this work. This research was carried out during the 2026 geometry and combinatorics research boot camp organized at Chiang Mai University, Thailand, on July 26-30, 2026.

\section{Disclosure statement}
The author has no competing or financial interest to declare.

\subsection{Declaration of generative AI use}
Claude (a mix of Fable, Opus, and Sonnet version 5) was used during the research to develop numerical optimization routines to explore the bounds and check some conjectures. However, all parts of the final analytical proof were derived without the aid of AI.

\bibliographystyle{amsplain}
\bibliography{reference}

\end{document}